\documentclass[graybox,envcountresetchap]{svmult}

\usepackage{amsfonts,amstext,amssymb,physics} 
\usepackage{amsmath}

\usepackage{mathptmx}       
\usepackage{bm}            
\usepackage{helvet}         
\usepackage{courier}        
\usepackage{type1cm}        
\usepackage{makeidx}         
\usepackage{graphicx}        
\usepackage{multicol}        
\usepackage[bottom]{footmisc}

\makeindex             

\usepackage[utf8]{inputenc}
\usepackage[T1]{fontenc}
\usepackage{mathtools}

\usepackage{amscd}

\usepackage{float}

\usepackage{longtable}

\usepackage{color}
\usepackage[all]{xy}

\usepackage{enumitem}  
\usepackage{calc}
\setlist{labelindent=1pt,itemsep=0.1cm}
\setlist[itemize]{leftmargin=1.2cm}
\setlist[enumerate]{itemindent=0em,leftmargin=1.2cm}
\setlist[enumerate,1]{label={\upshape(\roman*)}}

\usepackage[bookmarks=false,hidelinks,unicode]{hyperref}

\usepackage{cite}

\usepackage{marginnote}

\usepackage{tikz-cd}

\usepackage{nicematrix}

\allowdisplaybreaks[3]

\smartqed

\begin{document}

\title*{Generalization of Strang-Fix remark for discrete maximum principles on tetrahedral meshes}
\titlerunning{Discrete maximum principles on tetrahedral meshes}
\author{Safdar Ali \and Maksat Ashyraliyev \and  Sergey Korotov \and Sergei Silvestrov}
\authorrunning{S. Ali, M. Ashyraliyev,  S. Korotov, S. Silvestrov}
\institute{Safdar Ali \at Department of Business and Mathematics,
M\"alardalen University, Box 883, 721 23 V\"aster\aa s, Sweden, \email{safdar.ali@mdu.se}
\and Maksat Ashyraliyev \at Department of Business and Mathematics,
M\"alardalen University, Box 883, 721 23 V\"aster\aa s, Sweden, \email{maksat.ashyralyyev@mdu.se}
\and Sergey Korotov \at Department of Business and Mathematics,
M\"alardalen University, Box 883, 721 23 V\"aster\aa s, Sweden, \email{sergey.korotov@mdu.se}
\and
Sergei Silvestrov \at Department of Business and Mathematics,
M\"alardalen University, Box 883, 721 23 V\"aster\aa s, Sweden, \email{sergei.silvestrov@mdu.se}
}
%
%


\maketitle \label{chap:AliAshKorSil}

\vspace{-1cm}
\abstract{Most of qualitative properties of finite element (FE) approximations  for various PDE models, in particular the validity of the discrete maximum principles (DMPs), are usually guaranteed by certain geometric properties (e.g. acuteness/nonobtuseness) of FE meshes used.
In this work, we revisit the linear FE discretization of the Poisson equation and discuss, in the context of DMPs, the representation of the stiffness matrix entries in terms of the edge lengths and cotangents of the dihedral angles of the tetrahedral meshes, thus avoiding the common use of face areas and element volumes.
Using this type of entry representation, certain structural properties of meshes, and simple trigonometric relations, as proposed in the two-dimensional case by Strang and Fix in \cite[p.~78]{Strang-Fix}, we show that DMPs still hold (as the stiffness matrices remain $M$-matrices)
for many tetrahedral configurations with obtuse dihedral angles. Such a relaxation of the standard acuteness/nonobtuseness conditions can be vital in practical calculations, as these angle properties are difficult to preserve in global and local mesh refinements.
Several representative examples are presented.
\keywords{discrete maximum principle,  linear finite element, tetrahedral mesh,  angle conditions, Stieltjes matrix}\\
{\bf MSC 2020 Classification:} 65N30,	35B50 }

\section{Introduction}
\label{Sec-Introduction}

The {\it maximum principles} (MPs) are the basic qualitative properties of many elliptic-type equations and systems of such equations (see, for example, \cite{Ladyzhenskaya-Uraltseva,Protter-Weinberger}), which adequately imitate the physical behavior of the modelled phenomena. It is natural to require that the numerical approximations for such models satisfy suitable discrete analogs, commonly called the {\it discrete maximum principles} (DMPs) of MPs.

To the best of our knowledge, the first (rather simple) variant of the DMP (for the finite difference method (FDM)) was presented by Varga in 1966  (see  \cite{Varga-1966}). Later, in the 70th, more sophisticated versions of DMPs, suitable for both - FEM and FDM schemes,
were developed by Ciarlet and Raviart in \cite{Ciarlet-1970} and \cite{Ciarlet-Raviart}. In \cite{Ciarlet-Raviart}, for the first time, the angle conditions of acuteness and nonobtuseness guaranteeing DMPs were proposed for underlying triangulations. In 1995, K\v r\'{\i}\v zek and Lin Qun \cite{Krizek-Qun} analyzed the validity of the DMP for nonlinear elliptic problems, also considering the effect of quadrature rules. They proposed the notion of nonobtuse tetrahedral meshes in which all dihedral angles of the tetrahedral elements are nonobtuse. This angle condition guarantees  DMPs for many elliptic-type PDE models in the three-dimensional case.
The concepts of acuteness/nonobtuseness (in terms of dihedral angles between facets) are also applicable for proving DMPs
in the higher-dimensional case (of simplicial meshes), see e.g. \cite{Brandts-Korotov-Krizek-2007, Brandts-Korotov-Krizek-2008, Xu-Zikatanov-1999}, and for various nonlinear elliptic-type models
\cite{Karatson-Korotov-2005, Karatson-Korotov-Krizek-2007}. The generation and refinements of such simplicial meshes are reviewed in \cite{Brandts-Korotov-Krizek-Solc-2009}.

All the proofs in the above-mentioned works (see also references therein) are based on the observation that the produced FEM matrices are $M$-matrices (monotone matrices with non-positive off-diagonal entries) if the simplicial meshes are nonobtuse (or acute if the reactive-type terms are presented in the equations). In more detail, standard calculations of the off-diagonal entries of FEM matrices in the case of linear simplicial finite elements involve a summation of  terms like
\begin{equation}
\int\nolimits_{T} \nabla \phi_i \cdot \nabla \phi_j \, dx
=
- \frac{|F_i|\,|F_j|}{d^2\,|T|}\cos(\alpha_{ij}),
\label{eq:product-basis-functions}
\end{equation}
where $d$ is the dimension, $\phi_i$ and $\phi_j$ are the (linear) basis functions associated with the vertices of a simplex $T$, labelled by the indices $i$ and $j$,  $|F_i|$ and $|F_j|$ are the areas of the facets of $T$ opposite to those vertices, $|T|$ is the volume of $T$, and $\alpha_{ij}$ is the dihedral angle of $T$ formed by the facets $F_i$ and $F_j$. From this representation, it is evident that such terms are always nonpositive if all the dihedral angles are nonobtuse.
We notice that in the computations based on representation \eqref{eq:product-basis-functions}, we need the areas of the facets and volumes of the mesh elements.

However, it is widely known that (see e.g. \cite{Shewchuk-2002, Strang-Fix}) in two dimensions, the off-diagonal entries in the FEM matrix, e.g. appearing in the FEM discretization of the Poisson equation, are computable in terms of cotangents of angles as follows
\begin{equation}
    a_{ij} :=
    \int\nolimits_{\Omega} \nabla \phi_j \cdot \nabla \phi_i \, dx
    = -\frac{1}{2}\left(\cot\alpha + \cot\beta\right) \quad (i \neq j),
    \label{eq:off-diagonal-entry-2D}
\end{equation}
where $\alpha$ and $\beta$ are the angles in two adjacent triangles $T_1$ and $T_2$ opposite to their common  edge connecting vertices with indices
$i$ and $j$ ($B_i$ and $B_j$ in Figure~\ref{exm2a}). It is worth to note that  formula \eqref{eq:off-diagonal-entry-2D} can be immediately derived from \eqref{eq:product-basis-functions},
using the well-known formula for the area of a triangle.

\begin{figure}[h!]
    \centering
    \includegraphics[width=0.6\textwidth]{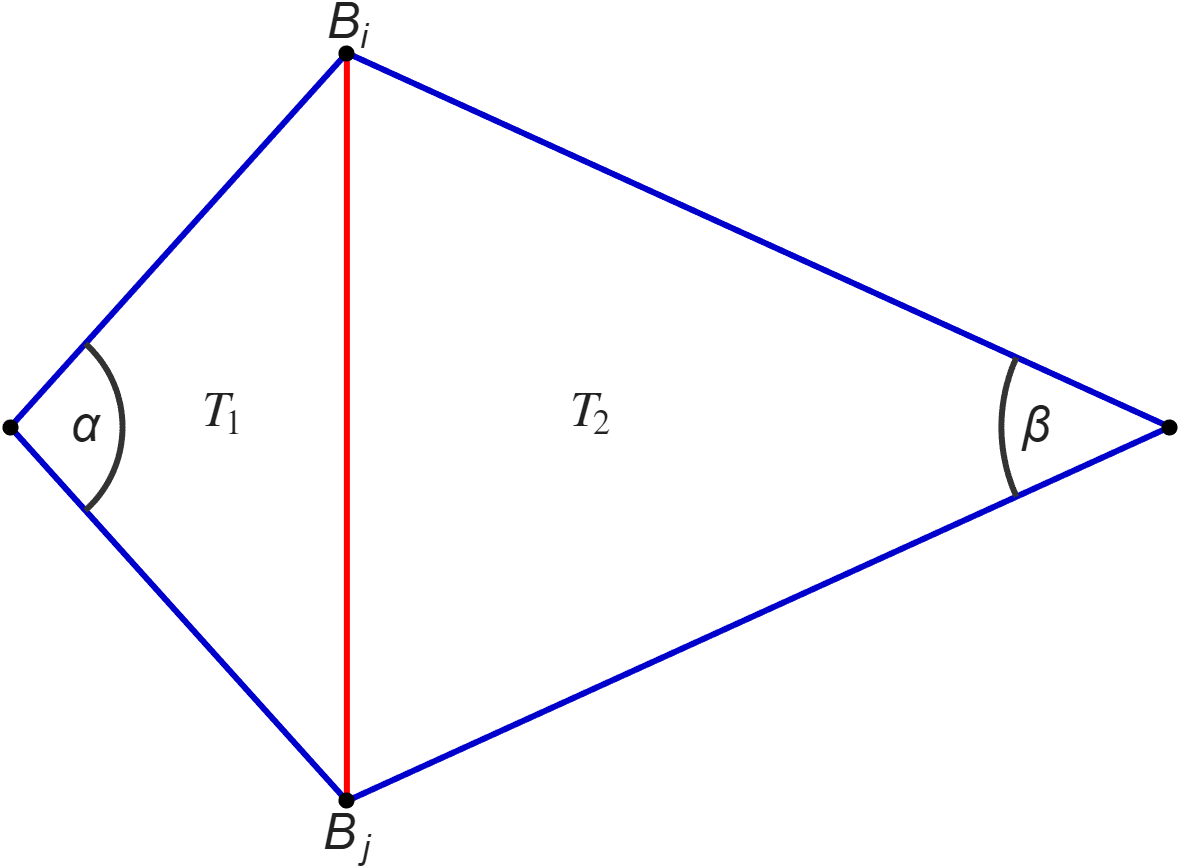}
    \caption{Illustration for formula \eqref{eq:off-diagonal-entry-2D}.}
    \label{exm2a}
\end{figure}

As remarked by Strang and Fix in their monograph, \cite[p.\,78]{Strang-Fix}, using \eqref{eq:off-diagonal-entry-2D}, the definition of cotangent, and a simple trigonometric equality for the sine of a sum of two angles, one immediately sees from     \eqref{eq:off-diagonal-entry-2D} that
\begin{equation}
    a_{ij} = -\frac{1}{2} \frac{\sin(\alpha + \beta)}{\sin \alpha \sin \beta}.
\end{equation}
Therefore, $a_{ij} \le 0$ (and, thus, DMP is valid, see the next section for relevant details) if and only if
\begin{equation}
 \alpha + \beta \le \pi,
 \label{eq:alpha-plus-beta}
\end{equation}
which, obviously, weakens the nonobtuseness requirement on each element in the triangulations, still staying in the framework of
$M$-matrices. We mention here  that condition \eqref{eq:alpha-plus-beta}
is closely related to the properties of Delaunay triangulations.

In this work, we discuss the presentation of $a_{ij} \ (i \neq j)$ in terms of edge lengths and cotangents of dihedral angles in dimensions higher than two, which is the generalizion of \eqref{eq:off-diagonal-entry-2D}, give a new proof for such a presentation in the case of tetrahedral meshes, and, in the spirit of the above-mentioned remark by Strang and Fix, show how such a representation can be used, in certain situations, for example, for some well-structured meshes, to prove DMPs for tetrahedral meshes containing obtuse dihedral angles.

\section{Model problem and maximum principles}
\label{Sec-Model-MPs}

For the denotations and  definitions of various functional spaces used in what follows, see e.g. \cite{Ciarlet-book}.

\medskip

We consider the following test problem: Find a function $u$ such that
\begin{align}
- \Delta u &= f \quad \text{in } \ \Omega, \label{probdef1} \\
u &= 0 \quad \text{on } \partial \Omega, \label{probdef2}
\end{align}
where $f$ is a sufficiently smooth function.
The domain $\Omega \subset \mathbb{R}^d$ is a bounded polytopic domain with
a Lipschitz continuous boundary $\partial \Omega$.

The classical solutions of elliptic problems of the second order are known to satisfy the MPs \cite{Ladyzhenskaya-Uraltseva,Karatson-Korotov-2005}, which, in our case, means the following implication:
\begin{equation}
    f \le 0 \quad \Rightarrow \quad \max\limits_{x \in \overline \Omega} \, u(x) \le 0.
    \label{eq:MP}
\end{equation}

\begin{remark}
 Sometimes, \eqref{eq:MP} is called the {\it nonpositivity/nonnegativity preservation}.
 In case of nonzero Dirichlet boundary conditions it takes a slightly different form,
 see e.g. \cite{Karatson-Korotov-2005}.
\end{remark}

The standard FEM schemes for approximating the (unkown) solution $u$ of \eqref{probdef1}--\eqref{probdef2}
are based on the so-called {\it weak formulation}: Find $u \in H^1_0(\Omega)$ such that
$$
a(u,v) = \mathcal F(v) \quad \forall v \in H^1_0(\Omega),
$$
where
\begin{equation*}
    a(u,v) = \int_{\Omega} \nabla u \cdot \nabla v \, dx  \quad \text{and} \quad \mathcal F(v) = \int_{\Omega} f v \, dx,
\end{equation*}
where, in addition, we assume that $f \in L^2(\Omega)$.

\begin{remark}
 The existence and uniqueness of the weak solution $u$ are provided by the standard Lax-Milgram lemma.
\end{remark}

Let $\mathcal{T}_h$ be a simplicial face-to-face partition ({\it mesh}) of $\overline \Omega$ with nodes $$B_1, \dots, B_{N+N^{\partial}}$$
from which $B_1, \dots, B_N$ are the interior ones, and the others, $B_{N+1}, \dots , B_{N+N^{\partial}}$, lie on the boundary $\partial \Omega$.
We define continuous piecewise linear basis functions $\phi_i, \ i = 1, \dots, N+ N^{\partial}$, associated with the nodes of $\mathcal{T}_h$,
so that
\begin{equation*}
    \phi_i(B_j) = \delta_{ij},
\end{equation*}
where $\delta_{ij}$ is the Kronecker symbol.   Obviously, all the basis functions defined above are nonnegative.
We shall use the basis functions related to the interior nodes to span the finite-dimensional subspace $V_h^0$ of $H^1_0(\Omega)$.

The FEM approximation for our test problem is then defined as a function $u_h \in V_h^0$ such that
$$
a(u_h,v_h) = \mathcal F(v_h) \quad \forall v_h \in V_h^0,
$$
whose existence and uniqueness are also provided by the Lax-Milgram lemma.

Algorithmically,
$$
u_h = \sum\limits_{i=1}^{N} y_i \phi_i,
$$
where the coefficients $y_i$ are the entries of the solution ${\bf y} = [y_1, \dots, y_N]^T$ of
the following square system of  $N$ linear algebraic equations
\begin{equation}
    {\bf A} {\bf y} = {\bf F},
    \label{eq:main-system}
\end{equation}
where the entries of ${\bf A}$ (which is often called the {\it stiffness matrix}, also in this paper) are defined as
\begin{equation*}
    a_{ij} = a(\phi_j, \phi_i), \quad i,j = 1,\dots, N,
\end{equation*}
and the entries of ${\bf F}$ are
\begin{equation*}
f_i = \mathcal F (\phi_i), \ i = 1, \dots, N.
\end{equation*}

The DMP for our model problem,  naturally imitating \eqref{eq:MP}, is the following statement:
\begin{equation}
    f \le 0 \quad \Rightarrow \quad \max\limits_{x \in \overline \Omega} \, u_h(x) \le 0.
    \label{eq:DMP}
\end{equation}

From the positivity of basis functions we see that $f \le 0$ implies that all entries $f_i$ of ${\bf F}$
are nonpositive and, in order to get the nonpositive FE approximation $u_h$, it is sufficient to guaranty
that all entries of ${\bf y}$ in the system \eqref{eq:main-system} are nonpositive. This property, in turn, can be obtained if the matrix ${\bf A}$ is monotone (invertible with a nonnegative inverse ${\bf A}^{-1}$).

One class of monotone matrices is formed by the so-called {\it Stieltjes matrices}, that is, matrices that are symmetric and positive definite with non-positive off-diagonal entries \cite{Varga-book}. This concept was used to prove DMPs for the class of elliptic-type problems, including our test problem, for example, in \cite{Hannukainen-Korotov-Vejchodsky-2008, Brandts-Korotov-Krizek-2008}.
As the stiffness matrices appearing in the above-described discretization are symmetric and positive definite, the only condition to guarantee in the DMP context is the correct sign of the off-diagonal entries. This will be the subject of the next section.

\begin{remark}
The cases of other (classical) boundary conditions (non-zero Dirichlet and mixed ones) can be treated similarly
to the zero Dirichlet boundary condition, with small modifications of technical character; see, for example, \cite{Hannukainen-Korotov-Vejchodsky-2008,  Karatson-Korotov-2005} for details.
\end{remark}

\section{On computation of the off-diagonal entries}
Let $T$ be a tetrahedron with vertices $B_1,B_2,B_3,B_4$.
For each pair of distinct indices $i \neq j$, the edge connecting
$B_i$ and $B_j$ and its length are denoted by $\ell_{ij}$ and
$|\ell_{ij}|$. The dihedral angle $\alpha_{ij}$ adjacent to $\ell_{ij}$ is
formed by two triangular faces $S_{ijk}$ and $S_{ijl}$ of $T$, where we use the following general agreement
on indices,
$$
\{k,l\}=\{1,2,3,4\}\setminus\{i,j\}.
$$
The above introduced denotations are illustrated in Figure~\ref{tetrahedron-T}. We notice that, for the case of tetrahedra,
we use a slightly different denotation for the faces
than those used in \eqref{eq:product-basis-functions} for general simplices.

\begin{figure}[H]
    \centering
    \includegraphics[width=0.4\textwidth]{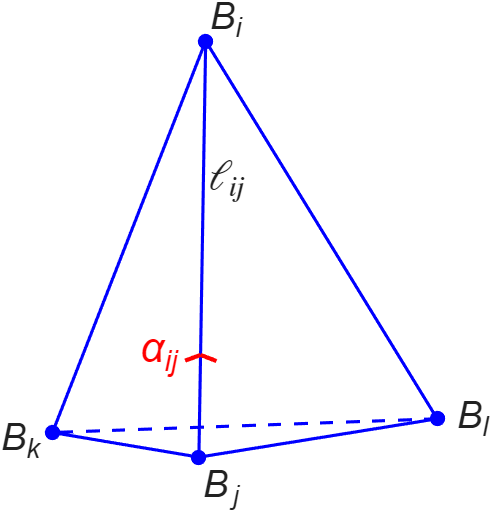}
    \caption{Denotations related to the tetrahedron $T$.}
    \label{tetrahedron-T}
\end{figure}

The {\it Cayley--Menger matrix} associated with the tetrahedron $T$ is
\begin{equation}
C(T) =
\begin{pmatrix}
0 & |\ell_{12}|^2 & |\ell_{13}|^2 & |\ell_{14}|^2 & 1 \\
|\ell_{21}|^2 & 0 & |\ell_{23}|^2 & |\ell_{24}|^2 & 1 \\
|\ell_{31}|^2 & |\ell_{32}|^2 & 0 & |\ell_{34}|^2 & 1 \\
|\ell_{41}|^2 & |\ell_{42}|^2 & |\ell_{43}|^2 & 0 & 1 \\
1 & 1 & 1 & 1 & 0
\end{pmatrix},
\end{equation}
and its determinant $\mathcal D $ (called the {\it Cayley--Menger determinant}),
is known to satisfy the relation
\begin{equation}
\mathcal D  = 288\,|T|^2.
\label{CM_volume}
\end{equation}

We further associate with the Cayley--Menger matrix a collection of cofactors with direct geometric significance. Let $C_{[l]}(T)$ be the $4 \times 4$ submatrix of $C(T)$ obtained by deleting the row and column indexed by $l$. Let
\begin{equation}
\mathcal{D}_{ijk} := \det C_{[l]}(T),
\end{equation}
\begin{equation}
\mathcal{D}_{ij} := (-1)^{k+l} \det C_{[k,l]}(T),
\end{equation}
where $C_{[k,l]}(T)$ denotes the $3 \times 3$ submatrix of $C(T)$ obtained by deleting the rows and columns indexed by $k$ and $l$.

\medskip

The following two results were proved in \cite{Karl-Wirth-2014}.

\medskip

\begin{lemma}
Let $\{i,j,k,l\} = \{1,2,3,4\}$, the following identities hold
\begin{equation}
\mathcal{D}_{ijk} = -16|S_{ijk}|^2,
\label{area_Id}
\end{equation}
\begin{equation}
\mathcal{D}_{ijk}\mathcal{D}_{ijl}
= \mathcal{D}_{ij}^2 + 2|\ell_{ij}|^2\mathcal{D},
\label{Cofactor_Id}
\end{equation}
where $|S_{ijk}|$ stands for the area of the face $S_{ijk}$.

\end{lemma}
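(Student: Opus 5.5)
The plan is to treat both identities as consequences of Cayley--Menger theory combined with one determinant identity, Sylvester's (Desnanot--Jacobi) condensation.

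\textbf{First identity.} Deleting row and column $l$ from $C(T)$ gives exactly the $4\times 4$ Cayley--Menger matrix of the triangle $B_iB_jB_k$. For a triangle with side lengths $a,b,c$ we expand
\[
\det\begin{pmatrix}0&c^2&b^2&1\\ c^2&0&a^2&1\\ b^2&a^2&0&1\\ 1&1&1&0\end{pmatrix}
= a^4+b^4+c^4-2a^2b^2-2b^2c^2-2c^2a^2 .
\]
Heron's formula in the form $16|S|^2=2a^2b^2+2b^2c^2+2c^2a^2-a^4-b^4-c^4$ then yields \eqref{area_Id}. The expansion is routine.

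\textbf{Second identity.} Write $M=C(T)$ and keep $l$ as a row/column index. Desnanot--Jacobi says that for indices $p\neq q$,
\[
\det M\cdot\det M_{[p,q]}=\det M_{[p]}\det M_{[q]}-\det M_{[p|q]}\det M_{[q|p]},
\]
where $M_{[p|q]}$ means deleting row $p$ and column $q$. We apply it with $p=k$, $q=l$. Then:
\begin{itemize}
\item $\det M_{[k]}=\mathcal D_{ijl}$ and $\det M_{[l]}=\mathcal D_{ijk}$;
\item $\det M=\mathcal D$;
\item by symmetry of $M$, the two mixed minors are equal, and up to a sign $(-1)^{k+l}$ each is the off-diagonal cofactor.
\end{itemize}
So Desnanot--Jacobi gives
\[
\mathcal D_{ijk}\mathcal D_{ijl}-(\text{cofactor}_{kl})^2=\mathcal D\cdot \det M_{[k,l]} .
\]

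The remaining step is to identify the quantities here with those in \eqref{Cofactor_Id}. The $3\times 3$ matrix $M_{[k,l]}$ has rows and columns indexed by $i,j$ and the last (ones) row. It equals
\[
\begin{pmatrix}0&|\ell_{ij}|^2&1\\ |\ell_{ij}|^2&0&1\\ 1&1&0\end{pmatrix},
\qquad \det M_{[k,l]} = 2|\ell_{ij}|^2 .
\]
This gives the term $2|\ell_{ij}|^2\mathcal D$.

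\textbf{Main obstacle.} The difficulty is notational, not mathematical. As written in the paper, $\mathcal D_{ij}=(-1)^{k+l}\det C_{[k,l]}(T)$ uses a $3\times 3$ principal minor. Read literally, that equals $\pm 2|\ell_{ij}|^2$, and \eqref{Cofactor_Id} would then fail dimensionally. The intended meaning, as in Karl--Wirth, must be the signed minor obtained by deleting row $k$ and column $l$. I would adopt that reading, note that the sign is irrelevant because the quantity is squared, and check the result on the regular tetrahedron as a sanity test.

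With that reading, Desnanot--Jacobi gives \eqref{Cofactor_Id} directly.
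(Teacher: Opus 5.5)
Your proof is correct, and it supplies something the paper does not: the paper states this lemma without proof and attributes both identities to Wirth--Dreiding \cite{Karl-Wirth-2014}. Your route is short and self-contained.
\begin{itemize}
\item The first identity is Heron's formula applied to the principal minor $C_{[l]}(T)$, which is the Cayley--Menger matrix of $S_{ijk}$.
\item The second is Desnanot--Jacobi with $(p,q)=(k,l)$. The general-position form is legitimate because the signs $(-1)^{k+l}$ of the two mixed cofactors cancel in their product. Symmetry of $C(T)$ makes that product a square. The principal $3\times 3$ minor $\det C_{[k,l]}(T)=2|\ell_{ij}|^2$, the Cayley--Menger determinant of the edge, gives the last term.
\end{itemize}

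Your diagnosis of the notation is also right and worth recording. Read literally, the paper's $\mathcal D_{ij}$ is exactly this $3\times 3$ principal minor up to sign, and then \eqref{Cofactor_Id} fails. For the unit regular tetrahedron, $C(T)$ is the $5\times 5$ all-ones matrix minus the identity, so $\mathcal D=4$ and $\mathcal D_{ijk}=-3$. The left side is then $9$, while the right side would be $4+8=12$. With $\mathcal D_{ij}=(-1)^{k+l}$ times the $4\times 4$ minor obtained by deleting row $k$ and column $l$, one gets $\mathcal D_{ij}=1$, hence $9=1+8$. This sign convention also makes Theorem~\ref{theorem_cofactor} return $\cos\alpha_{ij}=1/3$ with the correct sign.

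Finally, your argument does not depend on $d=3$. The same Desnanot--Jacobi step on the Cayley--Menger matrix of a $d$-simplex relates $|T|$, the facets $F_k,F_l$, and their common $(d-2)$-face $F_k\cap F_l$. For the dihedral angle $\alpha$ between $F_k$ and $F_l$ it gives $\sin\alpha=\frac{d}{d-1}\,|T|\,|F_k\cap F_l|/(|F_k|\,|F_l|)$. Via \eqref{eq:product-basis-functions}, this yields the local entry $-\frac{1}{d(d-1)}|F_k\cap F_l|\cot\alpha$, i.e.\ the $d$-dimensional analogue of Theorem~\ref{stiff:cot}.
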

\medskip
\begin{theorem} \label{theorem_cofactor}
The dihedral angle $\alpha_{ij}$ associated with the edge $\ell_{ij}$ of a nondegenerate tetrahedron $T$ satisfies
\begin{equation}
\cos(\alpha_{ij})
=
\frac{\mathcal{D}_{ij}}{\sqrt{\mathcal{D}_{ijl}\,\mathcal{D}_{ijk}}}.
\end{equation}
\end{theorem}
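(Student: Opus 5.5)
The plan is to identify the entries of the inverse Cayley--Menger matrix with inner products of the gradients of the barycentric coordinates $\lambda_1,\dots,\lambda_4$ of $T$. The key fact I intend to establish first is that for $k,l\in\{1,2,3,4\}$
\begin{equation*}
\bigl(C(T)^{-1}\bigr)_{kl} = -\tfrac12\,\nabla\lambda_k\cdot\nabla\lambda_l .
\end{equation*}
To prove it, I would write $|\ell_{pq}|^2=|B_p|^2+|B_q|^2-2B_p\cdot B_q$ in coordinates. I would then test $C(T)$ against the candidate inverse, using $\sum_k\lambda_k\equiv1$ (so $\sum_k\nabla\lambda_k=0$) and $\nabla\lambda_k\cdot(B_p-B_q)=\delta_{kp}-\delta_{kq}$. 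The last row and column of the inverse are fixed by these same identities.

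Next I would relate the gradients to the geometry. The gradient is $\nabla\lambda_k=-\frac{|F_k|}{3|T|}\,n_k$, where $F_k$ is the face opposite $B_k$ and $n_k$ is its outward unit normal. The faces opposite $B_k$ and $B_l$ are $S_{ijl}$ and $S_{ijk}$, and they meet along $\ell_{ij}$. Hence $n_k\cdot n_l=-\cos\alpha_{ij}$, which gives
\begin{equation*}
\nabla\lambda_k\cdot\nabla\lambda_l=-\frac{|S_{ijk}|\,|S_{ijl}|}{9|T|^2}\cos\alpha_{ij},
\end{equation*}
in agreement with \eqref{eq:product-basis-functions}. By Cramer's rule, $(C^{-1})_{kl}=\operatorname{cof}_{kl}/\mathcal D$. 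Combining this with \eqref{CM_volume} and \eqref{area_Id}, i.e. $|S_{ijk}|\,|S_{ijl}|=\frac1{16}\sqrt{\mathcal D_{ijk}\mathcal D_{ijl}}$, gives
\begin{equation*}
\cos\alpha_{ij}=\frac{18|T|^2}{|S_{ijk}||S_{ijl}|}\cdot\frac{\operatorname{cof}_{kl}}{\mathcal D}
=\frac{18\cdot16}{288}\,\frac{\operatorname{cof}_{kl}}{\sqrt{\mathcal D_{ijk}\mathcal D_{ijl}}}
=\frac{\operatorname{cof}_{kl}}{\sqrt{\mathcal D_{ijk}\mathcal D_{ijl}}} .
\end{equation*}
The constants cancel exactly, which is a good consistency check.

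It remains to identify $\operatorname{cof}_{kl}$, the signed minor obtained by deleting row $k$ and column $l$, with $\mathcal D_{ij}$. As a cross-check I would use the Desnanot--Jacobi identity
\begin{equation*}
\mathcal D\cdot\det C_{[k,l]}(T)=\det C_{[k]}(T)\det C_{[l]}(T)-\operatorname{cof}_{kl}^2 .
\end{equation*}
Since $\det C_{[k,l]}(T)=2|\ell_{ij}|^2$ by direct expansion of the $3\times3$ matrix, this is precisely \eqref{Cofactor_Id} with $\mathcal D_{ij}=\pm\operatorname{cof}_{kl}$. So the quantity in the theorem must be read as this signed off-diagonal cofactor. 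The identity also shows independently that $|\mathcal D_{ij}|\le\sqrt{\mathcal D_{ijk}\mathcal D_{ijl}}$, so the right-hand side is a legitimate cosine.

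The main obstacle is the bookkeeping of signs: the sign in the inverse formula, the $(-1)^{k+l}$ factor, and the orientation of the normals. Together these decide whether one gets $\cos\alpha_{ij}$ or $-\cos\alpha_{ij}$. I would settle this by checking the formula on the trirectangular tetrahedron $0,e_1,e_2,e_3$, where the dihedral angles at the edges through the origin equal $\pi/2$ and the others are known explicitly.
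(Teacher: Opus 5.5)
Your argument is correct. It necessarily takes a different route from the paper, which gives no proof of Theorem~\ref{theorem_cofactor}; it imports the theorem, together with the lemma, from Wirth--Dreiding \cite{Karl-Wirth-2014}.

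You derive the theorem from scratch. The ingredients are the classical facts \eqref{CM_volume} and \eqref{area_Id}, the identification of the upper-left $4\times4$ block of $C(T)^{-1}$ with $-\tfrac12\bigl(\nabla\lambda_k\cdot\nabla\lambda_l\bigr)_{k,l}$ (which checks out), and Cramer's rule. The constants cancel exactly as you computed. Besides being self-contained, this exposes a structure the citation hides. That block of $C(T)^{-1}$ is the local stiffness matrix up to the factor $-2|T|$, i.e.\ $\int_T\nabla\phi_k\cdot\nabla\phi_l\,dx=-2|T|\,\operatorname{cof}_{kl}/\mathcal D$. Consequently Theorem~\ref{theorem_cofactor} and the formula from \cite{Krizek-Qun} used in the proof of Theorem~\ref{stiff:cot} are equivalent statements. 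Your Desnanot--Jacobi step also yields \eqref{Cofactor_Id} for free.

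Your reading of $\mathcal D_{ij}$ is the right one, and the point is not pedantic. Taken literally, $C_{[k,l]}(T)$ is the $3\times3$ principal submatrix, so $\mathcal D_{ij}=(-1)^{k+l}\,2|\ell_{ij}|^2$. Then both \eqref{Cofactor_Id} and the theorem fail. For $B_1=0$, $B_2=e_1$, $B_3=e_2$, $B_4=e_3$ and the edge $B_1B_2$, one gets $16\neq 4+16$ in \eqref{Cofactor_Id} and $\cos\alpha_{12}=-\tfrac12$ instead of $0$. The intended object is the $4\times4$ minor with row $k$ and column $l$ deleted. With the prefactor $(-1)^{k+l}$, this is exactly your $\operatorname{cof}_{kl}$, with a plus sign.

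That also settles what you call the main obstacle. No step of your chain has an undetermined sign:
$\mathcal D=288|T|^2>0$, Cramer's rule applies to the symmetric $C(T)$, $(C^{-1})_{kl}=-\tfrac12\nabla\lambda_k\cdot\nabla\lambda_l$, and $n_k\cdot n_l=-\cos\alpha_{ij}$ for outward normals. Hence $\cos\alpha_{ij}$ and $\operatorname{cof}_{kl}$ automatically have the same sign, and the trirectangular test is only a sanity check. It passes: for the edge $B_3B_4$ of the tetrahedron above, $\operatorname{cof}_{12}=4$ and $\sqrt{\mathcal D_{341}\mathcal D_{342}}=4\sqrt3$. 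This gives $\cos\alpha_{34}=1/\sqrt3$, which is correct.
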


Now, using the above-presented notations and relations, we can prove the main results of this section.

\begin{theorem} \label{stiff:cot}
Let $T$ be a nondegenerate tetrahedral finite element with linear basis functions $\phi_1,\phi_2,\phi_3,\phi_4$.
Then for each pair of distinct indices $i \neq j$, the relevant contribution from $T$ to the off-diagonal stiffness entry $a_{ij}$ satisfies
\begin{equation}
a_{ij}|_T
:=
\int_T \nabla \phi_j \cdot \nabla \phi_i \, dx
=
-\frac{1}{6} |\ell_{k l}| \cot(\alpha_{kl}).
\end{equation}
\end{theorem}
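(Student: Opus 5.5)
We start from the general formula \eqref{eq:product-basis-functions} with $d=3$. In the tetrahedral notation, the facet opposite $B_i$ is $S_{jkl}$ and the facet opposite $B_j$ is $S_{ikl}$. These two faces share the edge $\ell_{kl}$, so the dihedral angle between them is $\alpha_{kl}$. Hence
\begin{equation*}
a_{ij}|_T = -\frac{|S_{jkl}|\,|S_{ikl}|}{9\,|T|}\cos(\alpha_{kl}).
\end{equation*}
It then suffices to prove the purely geometric identity
\begin{equation*}
|S_{jkl}|\,|S_{ikl}|\,\sin(\alpha_{kl}) = \frac{3}{2}\,|\ell_{kl}|\,|T|.
\end{equation*}
Dividing $\cos$ by $\sin$ then gives exactly $-\tfrac16|\ell_{kl}|\cot(\alpha_{kl})$.

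For this identity I plan to use the Cayley--Menger results quoted above, in keeping with the paper's announced ``new proof''. Apply Theorem~\ref{theorem_cofactor} to the edge $\ell_{kl}$, giving $\cos(\alpha_{kl}) = \mathcal D_{kl}/\sqrt{\mathcal D_{kli}\mathcal D_{klj}}$. Identity \eqref{area_Id} gives $\mathcal D_{kli}\mathcal D_{klj} = 256\,|S_{kli}|^2|S_{klj}|^2$; note that the product of two negative numbers is positive, so the square root makes sense.

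Then
\begin{equation*}
\sin^2(\alpha_{kl}) = 1-\frac{\mathcal D_{kl}^2}{\mathcal D_{kli}\mathcal D_{klj}} = \frac{2|\ell_{kl}|^2\mathcal D}{\mathcal D_{kli}\mathcal D_{klj}},
\end{equation*}
where the second equality uses \eqref{Cofactor_Id} for the pair $(k,l)$. Substituting \eqref{CM_volume}, $\mathcal D = 288|T|^2$, gives
\begin{equation*}
\sin^2(\alpha_{kl}) = \frac{576\,|\ell_{kl}|^2|T|^2}{256\,|S_{kli}|^2|S_{klj}|^2}.
\end{equation*}
Since $\sin(\alpha_{kl})>0$ for a nondegenerate tetrahedron, taking square roots yields $\sin(\alpha_{kl}) = \tfrac{3}{2}|\ell_{kl}||T|/(|S_{kli}||S_{klj}|)$, which is the identity claimed. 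As an independent check, the same formula follows elementarily from $|T| = \tfrac13 |S_{ikl}|\, h$ with $h = h_j\sin\alpha_{kl}$. Here $h_j = 2|S_{jkl}|/|\ell_{kl}|$ is the height of $B_j$ over the line through $\ell_{kl}$ within the face $S_{jkl}$.

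Combining the pieces,
\begin{equation*}
a_{ij}|_T = -\frac{\cos\alpha_{kl}}{9|T|}\cdot\frac{3|\ell_{kl}||T|}{2\sin\alpha_{kl}} = -\frac16|\ell_{kl}|\cot\alpha_{kl}.
\end{equation*}

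The main obstacle is bookkeeping rather than analysis. First, we must correctly identify that the dihedral angle between the facets opposite $B_i$ and $B_j$ is $\alpha_{kl}$, at the opposite edge, and not $\alpha_{ij}$. Second, we must keep the sign and index conventions of the cofactors consistent when invoking \eqref{Cofactor_Id} with the roles of $(i,j)$ and $(k,l)$ swapped. Since only $\mathcal D_{kl}^2$ enters, the sign ambiguity in $\mathcal D_{kl}$ is harmless. Finally, the sign of $\cos$ is carried directly from \eqref{eq:product-basis-functions}, so no sign information from $\mathcal D_{kl}$ is needed.
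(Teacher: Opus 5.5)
Your proposal is correct and follows essentially the same route as the paper: you combine the face-area/cosine formula for $a_{ij}|_T$, taken at the dihedral angle $\alpha_{kl}$ of the opposite edge, with the identity $|T| = \frac{2}{3}|S_{kli}|\,|S_{klj}|\sin(\alpha_{kl})/|\ell_{kl}|$, obtained from \eqref{area_Id}, \eqref{Cofactor_Id}, \eqref{CM_volume} and Theorem~\ref{theorem_cofactor}. The differences are cosmetic: you apply the Cayley--Menger identities directly to the pair $(k,l)$ rather than deriving the identity for $(i,j)$ and relabeling, and your elementary height argument $|T| = \frac13|S_{ikl}|\,h_j\sin(\alpha_{kl})$ already proves that volume identity on its own, without the Cayley--Menger results.
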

\begin{proof}
 From the area identity \eqref{area_Id}, it follows that
\begin{equation*}
    \mathcal{D}_{ijl}\,\mathcal{D}_{ijk} = 256\,|S_{ijl}|^2\,|S_{ijk}|^2.
\end{equation*}
Using Theorem~\ref{theorem_cofactor} for the  identity $\sin(\alpha_{ij}) = \sqrt{1 - \cos^2(\alpha_{ij})}$,
we observe that
\begin{align*}
\sin(\alpha_{ij})
=
\sqrt{1 - \frac{\mathcal{D}_{ij}^2}
{\mathcal{D}_{ijl}\mathcal{D}_{ijk}}}
=
\sqrt{\frac{2|\ell_{ij}|^2\mathcal{D}}
{\mathcal{D}_{ijl}\mathcal{D}_{ijk}}}
=
\frac{|\ell_{ij}|\sqrt{2\mathcal{D}}}
{\sqrt{\mathcal{D}_{ijl}\mathcal{D}_{ijk}}}
=
\frac{24|\ell_{ij}||T|}
{16|S_{ijl}||S_{ijk}|}.
\end{align*}
Solving for $|T|$ gives
\begin{equation}\label{volume_identity}
|T| = \frac{2}{3}\,\frac{|S_{ijk}|\,|S_{ijl}|}{|\ell_{ij}|}\,\sin(\alpha_{ij}).
\end{equation}

Because the indices used in the above derivation are arbitrary, we can also write it as follows:

\begin{equation}\label{volume_identity-2}
|T| = \frac{2}{3}\,\frac{|S_{kli}|\,|S_{klj}|}{|\ell_{kl}|}\,\sin(\alpha_{kl}).
\end{equation}

We know that, see e.g. \cite{Krizek-Qun},
\begin{equation}
    a_{ij}|_T
=
-\frac{|S_{kli}|\,|S_{klj}|}{9\,|T|}\cos(\alpha_{kl}).
\end{equation}
Substituting \eqref{volume_identity-2} into the above expression, we obtain
\begin{align}
a_{ij}|_T
&=
-\frac{|S_{kli}|\,|S_{klj}|}{9}
\cdot
\frac{3|\ell_{kl}|}
{2|S_{kli}||S_{klj}|\sin(\alpha_{kl})}
\cos(\alpha_{kl})
\\
&=
-\frac{|\ell_{kl}|}{6}
\frac{\cos(\alpha_{kl})}
{\sin(\alpha_{kl})} = -\frac{|\ell_{kl}|}{6}
\cot({\alpha_{kl}}).
\end{align}

\hfill $\square$
\end{proof}

\begin{corollary}
Let $a_{ij}$ denote the global stiffness matrix entry associated with the
edge $\ell_{ij}$. Then
\begin{equation}
    a_{ij}
=
-\frac{1}{6}
\sum_{T \in \mathcal{P}_{ij}}
|\ell_{kl}^T|
\cot\!\bigl(\alpha_{kl}^{T}\bigr),
\label{eq:a-ij-full}
\end{equation}
where $\mathcal{P}_{ij}$ denotes the cluster (or patch) of tetrahedra sharing the edge
$\ell_{ij}$, and $\alpha_{kl}^{T}$ stands for the dihedral angle corresponding to the edge
$\ell_{kl}^T$ (opposite to $\ell_{ij}$) in the tetrahedron $T$ from the cluster/patch $\mathcal{P}_{ij}$.
\end{corollary}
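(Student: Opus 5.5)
The corollary should follow from Theorem~\ref{stiff:cot} by standard element-by-element assembly of the global stiffness matrix. First I will write the global entry as a sum of element contributions. Since $\mathcal{T}_h$ is a face-to-face partition of $\overline\Omega$ and the tetrahedra overlap only in sets of measure zero, we have
\begin{equation*}
a_{ij} = \int_\Omega \nabla\phi_j\cdot\nabla\phi_i\,dx = \sum_{T\in\mathcal{T}_h}\int_T \nabla\phi_j\cdot\nabla\phi_i\,dx .
\end{equation*}

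Next I will discard the elements that contribute nothing. The gradients of the piecewise linear basis functions are constant on each element. On an element $T$, $\phi_i$ is nonzero only if $B_i$ is a vertex of $T$; otherwise $\phi_i$ vanishes at all four vertices of $T$ and hence vanishes identically on $T$. So the product $\nabla\phi_j\cdot\nabla\phi_i$ can be nonzero on $T$ only if both $B_i$ and $B_j$ are vertices of $T$. Because the mesh is face-to-face, this means exactly that the edge $\ell_{ij}$ is an edge of $T$, i.e. $T\in\mathcal{P}_{ij}$. All other terms vanish, and the sum reduces to $\sum_{T\in\mathcal{P}_{ij}} a_{ij}|_T$.

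Finally, for each $T\in\mathcal{P}_{ij}$, I will label its vertices locally as $B_i,B_j,B_k,B_l$. The restrictions of the global $\phi_i,\phi_j$ to $T$ are precisely the local linear basis functions of $T$ at those vertices. Theorem~\ref{stiff:cot} then gives $a_{ij}|_T=-\frac16|\ell^T_{kl}|\cot(\alpha^T_{kl})$, where $\ell^T_{kl}$ is the edge of $T$ opposite to $\ell_{ij}$ and $\alpha^T_{kl}$ is the dihedral angle at that edge. Summing over the patch yields \eqref{eq:a-ij-full}.

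The only point needing care is the bookkeeping. The indices $k,l$ depend on $T$, since each element of the patch has its own pair of remaining vertices; the superscript $T$ in \eqref{eq:a-ij-full} records this. The nondegeneracy hypothesis of Theorem~\ref{stiff:cot} holds because every mesh element is a genuine tetrahedron. Beyond these checks there is no real obstacle.
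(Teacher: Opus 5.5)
Your proposal is correct and takes the same route as the paper. The global entry is the sum of the element contributions over the patch $\mathcal{P}_{ij}$, and Theorem~\ref{stiff:cot} evaluates each of these; you just spell out what the paper leaves implicit, namely that $\phi_i$ vanishes on elements not containing $B_i$ and that the labels $k,l$ depend on $T$.
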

\begin{proof}
The global stiffness matrix entry $a_{ij}$ is obtained by summing the
corresponding local contributions over all tetrahedra
$T \in \mathcal{P}_{ij}$. Therefore, the result follows directly from Theorem \ref{stiff:cot}.
\hfill $\square$
\end{proof}

\begin{remark}
Different proofs of \eqref{eq:a-ij-full} can be found in \cite{Barth-1992, Xu-Zikatanov-1999}.
\end{remark}

\begin{remark}
The representation of off-diagonal entries in form \eqref{eq:a-ij-full}
gives certain implementation benefits in comparison with representation \eqref{eq:product-basis-functions}.
For example, we need not compute face areas and volumes of the mesh elements.
\end{remark}

If the representation \eqref{eq:a-ij-full} is used then there is a possibility, in the spirit of Strang-Fix
passage, to compensate positive contributions to $a_{ij}$ (appearing due to obtuse dihedral angles) by
certain negative contributions if the corresponding edge lengths are, for example, equal.
Two examples of tetrahedral clusters employing this idea are given in the last section.
\begin{remark}
In general, a possibility to allow (slightly) obtuse dihedral angles in the tetrahedral FE meshes
is quite attractive from a practical point of view as the nonobtuseness property is hard to
preserve during various global and especially local refinements of simplicial FE meshes which we perform in
order to compute more accurate FE approximations, see e.g. \cite{Brandts-Korotov-Krizek-Solc-2009}
for a relevant discussion.
\end{remark}

\section{Examples and final comments}
\label{sec4}

In this section, we present two examples of tetrahedral clusters, with some tetrahedra having obtuse dihedral angles, which produce nonpositive off-diagonal entries in the corresponding stiffness matrices.

\begin{example} \label{ex-edge4tetra} See Figure~\ref{fig:tetra-configurations_1} for denotations. We compute the
entry $a_{12}$, assuming that the edge $B_1 B_2$ is surrounded by four tetrahedra and that the vertices
$B_3, B_4, B_5, B_6$ lie in the same plane and form a rectangle, i.e. $|\ell_{34}| = |\ell_{56}|$ and $|\ell_{36}| = |\ell_{45}|$. Let the vertices $B_1$ and $B_2$ be mirror images of each other with respect to the rectangle
$B_3 B_4 B_5 B_6$.
Further, we assume that both $B_1$ and $B_2$ lie (arbitrarily) on the circle $\mathcal O$, perpendicular to $B_3 B_4 B_5 B_6$, so that its center coincides with the center of $B_3 B_4 B_5 B_6$ and that $\mathcal O$ passes through the midpoints of the edges $\ell_{34}$ and $\ell_{56}$.
Then, it is clear that the dihedral angles (in the corresponding tetrahedra) adjacent to edges $\ell_{34}$ and $\ell_{56}$ sum to $\pi$. Thus, the sum of the two corresponding contributions to $a_{12}$ is zero, that is, nonpositive.
If we move (simultaneously) both points $B_1$ and $B_2$ along the upper and lower arcs of $\mathcal O$, respectively, towards to the midpoint of $\ell_{34}$ (or that of $\ell_{56}$ like in Figure~\ref{fig:tetra-configurations_1} (right)), one can guarantee that the dihedral angles
(in relevant tetrahedra of the cluster) associated with the edges $\ell_{36}$ and $\ell_{45}$ become nonobtuse (even tending to zero) at some point, that is, the corresponding two contributions to $a_{12}$ are nonpositive.
As the result, the total summation in $a_{ij}$ gives the desired negative sign.
\end{example}

\smallskip

It is worth to notice that in the above example, we can produce an obtuse dihedral angle in the tetrahedron
$B_1 B_2 B_5 B_6$ arbitrarily close to $\pi$, with still having $a_{12} \le 0$.

\begin{figure}[h!]
    \centering
    \begin{minipage}{0.4\textwidth}
        \centering
        \includegraphics[width=\textwidth]{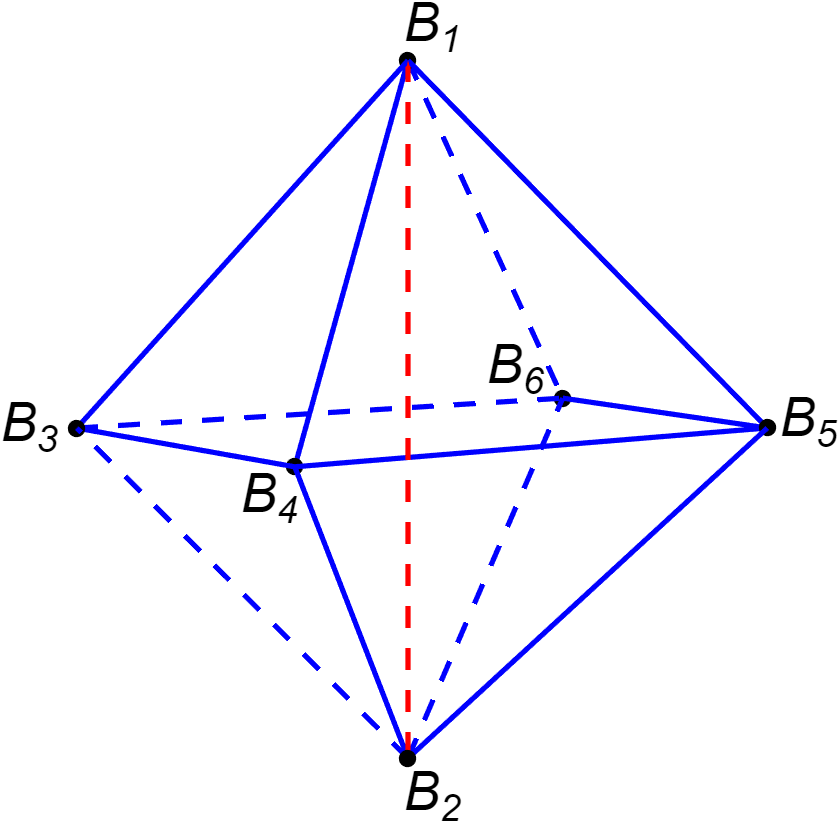}
    \end{minipage}
    \hfill
    \begin{minipage}{0.45\textwidth}
        \centering
              \includegraphics[width=\textwidth]{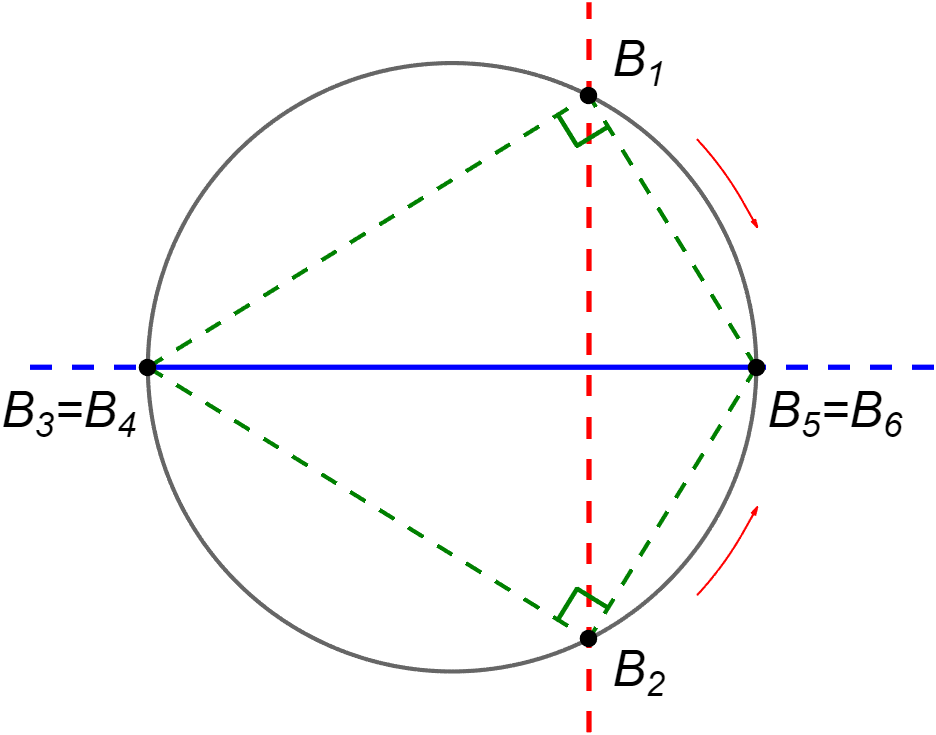}
    \end{minipage}
    \caption{Tetrahedral cluster of 4 tetrahedral elements in Example~\ref{ex-edge4tetra} (left). Cross-section of the cluster
    with $B_1$ and $B_2$ moving along the arcs of $\mathcal O$ toward to the midpoint of $B_5 B_6$ (right).}
    \label{fig:tetra-configurations_1}
\end{figure}

\begin{example} \label{ex-edge4tetraonrectanglebase} In this example, we assume that the vertex $B_2$ lies inside  the rectangle formed by the vertices $B_3, B_4, B_5, B_6$. The vertex $B_1$ is always in such a position that it is above the plane going through $B_3, B_4, B_5, B_6$, and its projection, denoted by $B_1'$, onto this plane lies outside the rectangle  $B_3 B_4 B_6 B_5$, and the straight line passing through $B_2$ and the projection $B_1'$ is parallel to the edges of the rectangle. For example, the situation can be illustrated as shown in Figure~\ref{fig:pyramid-configurations_2}. Then the dihedral angle adjacent to the edge $\ell_{56}$ is obtuse, but the sum of dihedral angles along edge $\ell_{56}$ and edge $\ell_{34}$  is less than $\pi$. See the triangle $P_1 B_1 P_2$, where
the points $P_1$ and $P_2$ are the crossections of the straight line through $B_2$ and $B_1'$ with two sides of the rectangle $B_3 B_4 B_5 B_6$. The other two dihedral angles along edge $\ell_{46}$ and edge $\ell_{35}$   are always acute. Thus, the
entry $a_{12}$ is even strictly negative for such a cluster.
\end{example}

\begin{figure}[h!]
    \centering
    \begin{minipage}{0.5\textwidth}
        \centering
        \includegraphics[width=\textwidth]{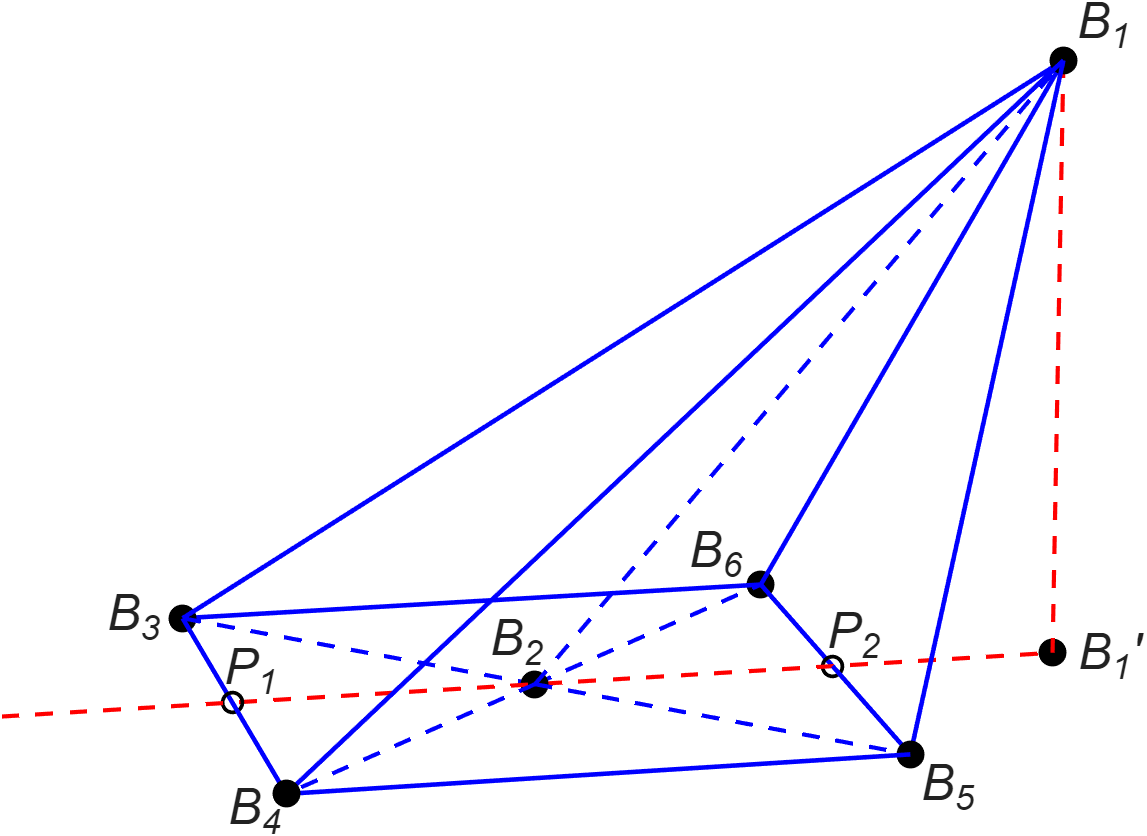}
    \end{minipage}
    \hfill
    \begin{minipage}{0.33\textwidth}
        \centering
              \includegraphics[width=\textwidth]{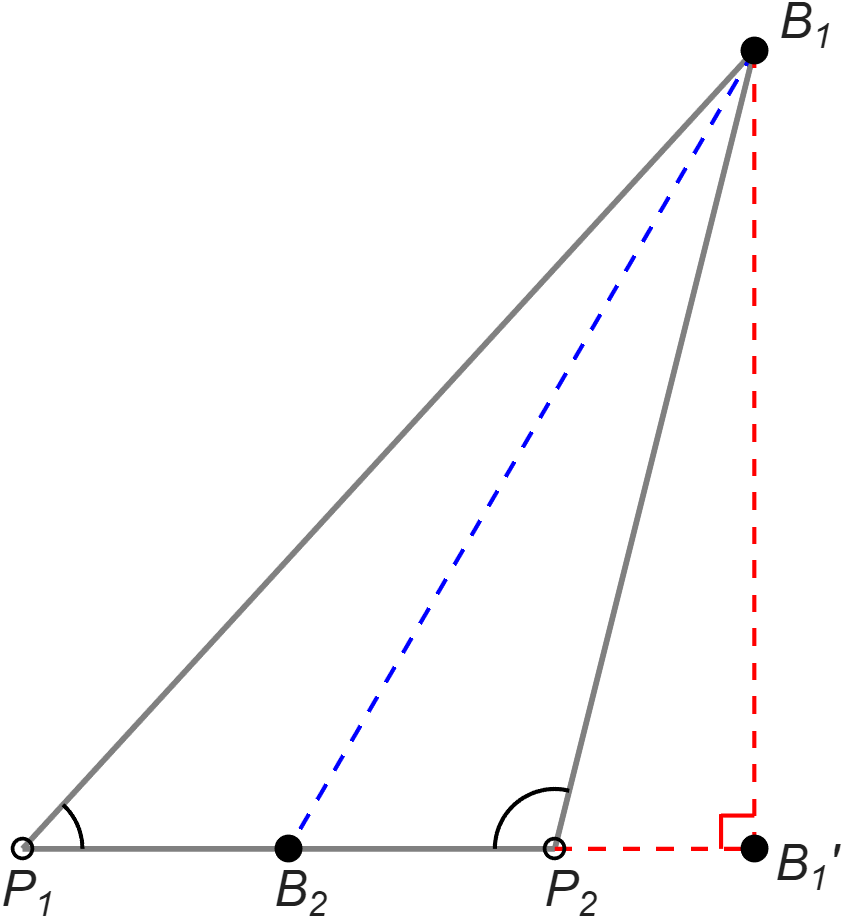}
    \end{minipage}
    \caption{Cluster of an oblique quadrilateral pyramid in Example~\ref{ex-edge4tetraonrectanglebase} (left). Cross-section through the auxiliary points $P_1$, $P_2$, and the vertex $B_1$, and the projection point $B_1'$ marked (right).}
    \label{fig:pyramid-configurations_2}
\end{figure}

As a conclusion, we can say that if we have a mesh, where each interior edge has a tetrahedral cluster around itself
in the manner of the above examples, then we have the validity of the DMP on that mesh (with some dihedral angles being obtuse) for our model problem. It seems that
such an approach can be succesfully used if the meshes in focus obey suitable structure properties.
This will be the subject of the forthcoming paper.



\begin{thebibliography}{99}
\bibitem{Barth-1992}
Barth, T.:
Aspects of unstructured grids and finite-volume solvers for the Euler and Navier-Stokes equations.
Tech. Report AGARD Report 787, AGARD, 1992

\bibitem{Brandts-Korotov-Krizek-2007}
Brandts, J., Korotov, S.,  K\v{r}\'{i}\v{z}ek, M.: Dissection of the path-simplex in $R^n$ into $n$ path-subsimplices. Linear Algebra Appl. \textbf{421}(2-3), 382-393 (2007)

\bibitem{Brandts-Korotov-Krizek-2008}
Brandts, J. H., Korotov, S., K\v{r}\'{i}\v{z}ek, M.:
The discrete maximum principle for linear simplicial finite element approximations of a reaction--diffusion problem.
Linear Algebra Appl. \textbf{429}(10), 2344--2357 (2008)

\bibitem{Brandts-Korotov-Krizek-Solc-2009}
Brandts, J., Korotov, S., K\v{r}\'{i}\v{z}ek, M., \v{S}olc, J.:
On nonobtuse simplicial partitions.
SIAM Rev. \textbf{51}(2), 317--335 (2009)

\bibitem{Ciarlet-1970} Ciarlet, P. G.: Discrete maximum principle for finite-difference operators. Aequationes Math. \textbf{4}(3), 338-352 (1970)


\bibitem{Ciarlet-book}
Ciarlet, P. G.: The Finite Element Method for Elliptic Problems. North-Holland, Amsterdam (1978)



\bibitem{Ciarlet-Raviart} Ciarlet, P. G., Raviart, P. A.:  Maximum principle and uniform convergence for the finite element method. Comput. Methods Appl. Mech. Eng. \textbf{2}(1), 17-31 (1973)

\bibitem{Hannukainen-Korotov-Vejchodsky-2008}
Hannukainen, A., Korotov, S., Vejchodsk\'{y}, T.:
On weakening conditions for discrete maximum principles for linear finite element schemes.
In: Margenov, S., Vulkov, L.G., Waśniewski, J. (eds) Numerical Analysis and Its Applications. NAA 2008. Lecture Notes in Computer Science, vol 5434. Springer, Berlin, Heidelberg,  297--304

\bibitem{Karatson-Korotov-2005}
Kar\'{a}tson, J., Korotov, S.:
Discrete maximum principles for finite element solutions of nonlinear elliptic problems with mixed boundary conditions.
Numer. Math. \textbf{99}(4), 669--698 (2005)

\bibitem{Karatson-Korotov-Krizek-2007}
 Kar\'{a}tson, J., Korotov, S., K\v{r}\'{i}\v{z}ek, M. On discrete maximum principles for nonlinear elliptic problems. Math. Comput. Simul. \textbf{76}(1-3), 99--108 (2007)


\bibitem{Krizek-Qun} K\v{r}\'{i}\v{z}ek, M.,  Qun, L.: On diagonal dominance of stiffness matrices in 3D. East-West J. Numer. Math, \textbf{3}(1), 59-69 (1995)


\bibitem{Ladyzhenskaya-Uraltseva}
Ladyzhenskaya, O. A., Ural'tseva, N. N. Linear and Quasilinear Elliptic Equations. Academic Press, New York, 1968

\bibitem{Protter-Weinberger} Protter, M. H., Weinberger, H. F.: Maximum Principles in Differential Equations. Springer-Verlag, New York, 1984


\bibitem{Shewchuk-2002} Shewchuk, J. R.: What is a good linear element? Interpolation, conditioning, and quality measures. In: N. Chrisochoides (eds) Proceedings of the 11th International Meshing Roundtable, IMR 2002, 115-126 (2002)

\bibitem{Strang-Fix} Strang, G., Fix, G. J.: An Analysis of the Finite Element Method. Prentice-Hall, Englewood Cliffs, NJ, 1973

\bibitem{Varga-1966} Varga, R. S.: On a discrete maximum principle. SIAM J. Numer. Anal. \textbf{3}(2), 355--359 (1966)

\bibitem{Varga-book} Varga, R. S.: Matrix iterative analysis. Springer Science and Business Media, Vol. 27, 1999

\bibitem{Xu-Zikatanov-1999}
Xu, J., Zikatanov, L.:
A monotone finite element scheme for convection-diffusion equations.
Math. Comput. \textbf{68}(228), 1429--1446 (1999)

\bibitem{Karl-Wirth-2014} Wirth, K., Dreiding, A. S.:
Relations between edge lengths, dihedral and solid angles in tetrahedra.
J. Math. Chem. \textbf{52}(6), 1624--1638 (2014)

\end{thebibliography}
\end{document}